\documentclass[leqno,12pt]{article} 
\setlength{\textheight}{21cm}
\setlength{\textwidth}{16cm}
\setlength{\oddsidemargin}{0cm}
\setlength{\evensidemargin}{0cm}
\setlength{\topmargin}{0cm}
\usepackage{amsmath, amssymb}
\usepackage{amsthm} 
%
%
%
\theoremstyle{plain} 
\newtheorem{theorem}{\indent\sc Theorem}[section]
\newtheorem{lemma}[theorem]{\indent\sc Lemma}
\newtheorem{corollary}[theorem]{\indent\sc Corollary}
\newtheorem{proposition}[theorem]{\indent\sc Proposition}

\theoremstyle{definition} 

\newtheorem{remark}[theorem]{\indent\sc Remark}
\newtheorem{example}[theorem]{\indent\sc Example}

%

%

\newcommand\on{\operatorname}

\newcommand\sign{\on{sign}}

\makeatletter
%
\makeatother
%
\title{Certain symmetries of $\mathbb R^2$ with diagonal metrics}
%
\author{Adara M. Blaga}

\date{}
%
\pagestyle{myheadings}

\begin{document}

\maketitle

\markboth{{\small\it {\hspace{1cm} Certain symmetries of $\mathbb R^2$ with diagonal metrics}}}{\small\it{Certain symmetries of $\mathbb R^2$ with diagonal metrics\hspace{1cm}}}

\footnote{ 
2010 \textit{Mathematics Subject Classification}.
35Q51, 53B25, 53B50.
}
\footnote{ 
\textit{Key words and phrases}.
diagonal metric; Killing vector field.
}

\begin{abstract}
We put into light the Killing vector fields on $\mathbb R^2$ endowed with a diagonal Riemannian metric. According to certain restrictions on the Lam\'{e} coefficients, we describe the symmetries of the metric.
\end{abstract}

\section{Preliminaries}

Killing vector fields represent a special class of vector fields on a Riemannian manifold, constituting the symmetries of the metric. Their role in understanding the geometry of the space is crucial, helping to identify the isometries. In general relativity they are used to describe the symmetries of the spacetime such as time translation and rotations. They lead to conserved quantities along the geodesics, such as the energy and the angular momentum, which are important in physical theories.
More about the topological consequences of the existing of Killing vector fields on a Riemannian manifold, as well as their applications, can be found, for instance, in \cite{ber,be,vb,g,no,ro,yo}.

The aim of this paper is to characterize the Killing vector fields on $\mathbb R^2$ with respect to a diagonal metric
$\left( \begin{array}{cc}
    \frac{1}{f_1^2} & 0 \\
    0 & \frac{1}{f_2^2} \\
  \end{array}
\right)
$, where $f_1$ and $f_2$ are smooth real-valued functions nowhere zero on $\mathbb R^2$, when the Lam\'{e} coefficients depend on a single variable. Moreover, if one of them is a constant, we will determine the Killing vector fields with respect to that metric. By fixing a metric, there exist three independent Killing vector fields on $\mathbb R^2$, namely, corresponding to two translations and one rotation. 
Recently, some properties of $\mathbb R^3$ endowed with a similar metric, such as, the flatness of the manifold and the existence of almost $\eta$-Ricci solitons have been studied in \cite{bl22} and \cite{balr}.

\bigskip

Let ${g}$ be a Riemannian metric on $\mathbb R^2$ defined by:
$${g}=\frac{1}{f_1^2}dx^1\otimes dx^1+\frac{1}{f_2^2}dx^2\otimes dx^2,$$
where $f_1$ and $f_2$ are smooth real functions nowhere zero on $\mathbb R^2$, and $x^1,x^2$ are the standard coordinates in $\mathbb R^2$. The functions $\beta_i=\frac{\displaystyle 1}{\displaystyle f_i^2}$, $i\in \{1,2\}$, are known as the \textit{Lam\'{e} coefficients} of the metric. Let
$$\Big\{E_1:=f_1\frac{\partial}{\partial x^1}, \ \ E_2:=f_2\frac{\partial}{\partial x^2}\Big\}$$
be a local orthonormal frame w.r.t. $g$. We will denote as follows:
$$\frac{f_2}{f_1}\cdot\frac{\partial f_1}{\partial x^2}=:l_{12}, \ \ \frac{f_1}{f_2}\cdot\frac{\partial f_2}{\partial x^1}=:l_{21}.$$
On the basis vector fields $E_1$ and $E_2$, the Lie bracket is given by:
$$[E_1,E_2]=-l_{12}E_1+l_{21}E_2=-[E_2,E_1],$$
and the Levi-Civita connection $\nabla$ of $g$ is given by:
$$\nabla_{E_1}E_1=l_{12}E_2, \ \ \nabla_{E_1}E_2=-l_{12}E_1, \ \ \nabla_{E_2}E_2=l_{21}E_1,  \ \ \nabla_{E_2}E_1=-l_{21}E_2.$$

Whenever a function $f$ on $\mathbb R^2$ depends on one of its variables, we will write in its argument only that variable in order to emphasize this fact, for example, $f(x^i)$.

\section{Killing vector fields w.r.t. these metrics}

We recall that a vector field $V$ is called a \textit{Killing vector field} with respect to a Riemannian metric $g$ (or, a \textit{symmetry} of $g$) \cite{killing} if the Lie derivative $\pounds$ of $g$ in the direction of $V$ vanishes, i.e., if the Killing's equation, $\pounds_Vg=0$, holds true. In this case, $g$ is preserved by the flow of $V$ (which consists of isometries).

\bigskip

Let $V=V^1E_1+V^2E_2$, where $V^1$ and $V^2$ are two smooth functions on $\mathbb R^2$. Then,
\begin{align*}
(\pounds_Vg)(E_i,E_j):&=V(g(E_i,E_j))-g([V,E_i],E_j)-g(E_i,[V,E_j])\\
&=g(\nabla_{E_i}V,E_j)+g(E_i,\nabla_{E_j}V)\\
&=E_i(V^j)+E_j(V^i)+\sum_{k=1}^2V^k\{g(\nabla_{E_i}E_k,E_j)+g(E_i,\nabla_{E_j}E_k)\}
\end{align*}
for any $i,j\in \{1,2\}$, which is equivalent to
\begin{equation*}
\left\{
    \begin{aligned}
&(\pounds_Vg)(E_1,E_1)=2\{E_1(V^1)-l_{12}V^2\}\\
&(\pounds_Vg)(E_2,E_2)=2\{E_2(V^2)-l_{21}V^1\}\\
&(\pounds_Vg)(E_1,E_2)=E_1(V^2)+E_2(V^1)+l_{12}V^1+l_{21}V^2
    \end{aligned}
  \right. \ ,
\end{equation*}
and we can state

\begin{lemma}\label{lp}
A vector field $V=V^1E_1+V^2E_2$ is a Killing vector field on $(\mathbb R^2,g)$ if and only if
\begin{equation*}
\left\{
    \begin{aligned}
&f_1\frac{\displaystyle \partial V^1}{\displaystyle \partial x^1}-\frac{\displaystyle f_2}{\displaystyle f_1}\frac{\displaystyle \partial f_1}{\displaystyle \partial x^2}V^2=0\\
&f_2\frac{\displaystyle \partial V^2}{\displaystyle \partial x^2}-\frac{\displaystyle f_1}{\displaystyle f_2}\frac{\displaystyle \partial f_2}{\displaystyle \partial x^1}V^1=0\\
&f_1\frac{\displaystyle \partial V^2}{\displaystyle \partial x^1}+f_2\frac{\displaystyle \partial V^1}{\displaystyle \partial x^2}+\frac{\displaystyle f_2}{\displaystyle f_1}\frac{\displaystyle \partial f_1}{\displaystyle \partial x^2}V^1+\frac{\displaystyle f_1}{\displaystyle f_2}\frac{\displaystyle \partial f_2}{\displaystyle \partial x^1}V^2=0
    \end{aligned}
  \right. \ .
\end{equation*}
\end{lemma}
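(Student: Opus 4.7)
\smallskip
\noindent\textbf{Proof plan.} Since $\{E_1,E_2\}$ is a (local) frame and $\pounds_Vg$ is a symmetric $(0,2)$-tensor, the vanishing of $\pounds_Vg$ is equivalent to the three scalar conditions $(\pounds_Vg)(E_1,E_1)=0$, $(\pounds_Vg)(E_2,E_2)=0$, and $(\pounds_Vg)(E_1,E_2)=0$. The plan is therefore to take these three equations, already rewritten in the excerpt as
$$E_1(V^1)-l_{12}V^2=0,\qquad E_2(V^2)-l_{21}V^1=0,\qquad E_1(V^2)+E_2(V^1)+l_{12}V^1+l_{21}V^2=0,$$
and translate them into the stated PDEs by unwinding the definitions of $E_i$ and $l_{ij}$.

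The only ingredient needed to obtain the displayed formulas for $(\pounds_Vg)(E_i,E_j)$ is the identity
$$(\pounds_Vg)(X,Y)=g(\nabla_XV,Y)+g(X,\nabla_YV),$$
valid because $\nabla$ is metric. Writing $V=V^1E_1+V^2E_2$, expanding $\nabla_{E_i}V$ by Leibniz, and inserting the Levi-Civita formulas
$\nabla_{E_1}E_1=l_{12}E_2$, $\nabla_{E_1}E_2=-l_{12}E_1$, $\nabla_{E_2}E_1=-l_{21}E_2$, $\nabla_{E_2}E_2=l_{21}E_1$ from the preliminaries, the cross-terms cancel out pairwise and yield precisely the three expressions above. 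This is a routine calculation with no real obstacle.

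It then remains to substitute $E_1=f_1\,\partial/\partial x^1$ and $E_2=f_2\,\partial/\partial x^2$ into the derivative terms $E_i(V^j)$, and to replace $l_{12}$ and $l_{21}$ by $(f_2/f_1)\,\partial f_1/\partial x^2$ and $(f_1/f_2)\,\partial f_2/\partial x^1$, respectively. The first two equations (after dividing by $2$) give the first two lines of the claimed system, and the third equation gives the last line. The equivalence is two-sided since each step is a direct substitution, so no auxiliary lemma is required.

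The main thing to watch is bookkeeping: one must make sure that when $\nabla_{E_i}V$ is expanded, the coefficient of $V^k$ contributed by $g(\nabla_{E_i}E_k,E_j)+g(E_i,\nabla_{E_j}E_k)$ agrees with the expressions above. The only mildly delicate point is the off-diagonal equation, where contributions from both $\nabla_{E_1}E_k$ and $\nabla_{E_2}E_k$ enter; but since $\nabla_{E_1}E_2$ is proportional to $E_1$ and $\nabla_{E_2}E_1$ to $E_2$, each of $l_{12}V^1$ and $l_{21}V^2$ appears exactly once, giving the stated form. This is the only place where a sign or index error could creep in, but otherwise the lemma is an immediate unpacking of the definitions.
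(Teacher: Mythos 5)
Your proposal is correct and follows exactly the paper's route: compute $(\pounds_Vg)(E_i,E_j)=g(\nabla_{E_i}V,E_j)+g(E_i,\nabla_{E_j}V)$ using the given Levi-Civita formulas, then unwind $E_i=f_i\,\partial/\partial x^i$ and the definitions of $l_{12},l_{21}$ to obtain the three stated PDEs. No substantive difference from the paper's argument.
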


A natural question which arise is: \textit{When the orthonormal basis vector fields are Killing vector fields for the metric?}

\begin{proposition}
Let $i\in \{1,2\}$. The vector field $E_i=f_i\frac{\displaystyle \partial}{\displaystyle \partial x^i}$ is a Killing vector field on $(\mathbb R^2,g)$ if and only if $f_1$ depends only on $x^1$ and $f_2$ depends only on $x^2$.
\end{proposition}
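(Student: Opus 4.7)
The plan is to apply Lemma \ref{lp} directly to the vector fields $E_1$ and $E_2$, i.e. to the constant components $(V^1,V^2)=(1,0)$ and $(V^1,V^2)=(0,1)$ respectively, and read off which partial derivatives are forced to vanish.

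For $V=E_1$, the first equation of Lemma \ref{lp} is automatically satisfied (all terms vanish because $V^2=0$ and $V^1$ is constant), while the second equation reduces to $\frac{f_1}{f_2}\frac{\partial f_2}{\partial x^1}=0$, forcing $\frac{\partial f_2}{\partial x^1}=0$, and the third equation reduces to $\frac{f_2}{f_1}\frac{\partial f_1}{\partial x^2}=0$, forcing $\frac{\partial f_1}{\partial x^2}=0$. A symmetric computation for $V=E_2$ yields the same two conditions, with the roles of equations one and two swapped. Hence in each case, $E_i$ being Killing is equivalent to $f_1$ depending only on $x^1$ and $f_2$ depending only on $x^2$.

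For the converse, if $f_1=f_1(x^1)$ and $f_2=f_2(x^2)$, then $l_{12}=l_{21}=0$, so substituting constant components $(1,0)$ or $(0,1)$ into the three equations of Lemma \ref{lp} gives $0=0$ throughout, confirming that $E_1$ and $E_2$ are Killing.

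There is no real obstacle here: the argument is a direct substitution, and the only thing worth highlighting is the symmetric observation that the condition for $E_1$ already encodes the requirement on $f_1$ (via the coefficient $l_{12}$ in the third equation), and likewise for $E_2$, so the conclusion is the same whichever index $i$ is chosen.
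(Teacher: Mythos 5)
Your proposal is correct and takes essentially the same approach as the paper: a direct substitution of the constant components $(1,0)$ or $(0,1)$ into the three equations of Lemma \ref{lp}. The paper simply records the two resulting conditions $\frac{\partial f_2}{\partial x^1}=0$ and $\frac{\partial f_1}{\partial x^2}=0$ without spelling out the computation, which you have carried out explicitly (and correctly) for both choices of $i$, including the converse.
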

\begin{proof}
In this case, the condition of being a Killing vector field from Lemma \ref{lp} becomes
\begin{equation*}
\left\{
    \begin{aligned}
&\frac{\displaystyle \partial f_2}{\displaystyle \partial x^1}=0\\
&\frac{\displaystyle \partial f_1}{\displaystyle \partial x^2}=0
    \end{aligned}
  \right. \ ,
\end{equation*}
hence, $f_1=f_1(x^1)$ and $f_2=f_2(x^2)$. 
\end{proof}

Concerning the orthogonal vector fields $\frac{\displaystyle \partial }{\displaystyle \partial x^i}$, $i\in \{1,2\}$, we prove the following.

\begin{proposition}
Let $i\in \{1,2\}$. The vector field $\frac{\displaystyle \partial}{\displaystyle \partial x^i}$ is a Killing vector field on $(\mathbb R^2,g)$ if and only if $f_1$ and $f_2$ depend only on $x^j$, where $j\in \{1,2\}$, $j\neq i$.
\end{proposition}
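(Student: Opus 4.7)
The plan is to apply Lemma~\ref{lp} directly to $\frac{\partial}{\partial x^i}$ after rewriting it in the orthonormal frame. Since $\frac{\partial}{\partial x^i}=\frac{1}{f_i}E_i$, the components with respect to $\{E_1,E_2\}$ are $V^i=\frac{1}{f_i}$ and $V^k=0$ for $k\neq i$. Substituting this into the three equations of Lemma~\ref{lp} collapses them to conditions on the partial derivatives of $f_1$ and $f_2$, and the result should follow by inspection.

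Concretely, take $i=1$, so $V^1=\frac{1}{f_1}$ and $V^2=0$. The first Killing equation reduces to $f_1\,\partial_{x^1}(1/f_1)=0$, i.e.\ $\partial_{x^1}f_1=0$. The second equation loses its $\partial_{x^2}V^2$ term and becomes $-\frac{1}{f_2}\partial_{x^1}f_2=0$, i.e.\ $\partial_{x^1}f_2=0$. I expect the third (mixed) equation to vanish identically after plugging in: the $f_2\,\partial_{x^2}(1/f_1)$ term should exactly cancel the term $\frac{f_2}{f_1^2}\partial_{x^2}f_1$, so it gives no new constraint. This would force $f_1$ and $f_2$ to depend only on $x^2$, which is $x^j$ with $j\neq i=1$. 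The case $i=2$ is perfectly symmetric, yielding $f_1=f_1(x^1)$ and $f_2=f_2(x^1)$.

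For the converse, if both $f_1$ and $f_2$ depend only on $x^j$ with $j\neq i$, then $\partial_{x^i}f_1=\partial_{x^i}f_2=0$, and the same substitution shows the three equations of Lemma~\ref{lp} are satisfied, so $\frac{\partial}{\partial x^i}$ is Killing.

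There is no genuine obstacle here; the argument is a direct calculation. The only point requiring a small amount of care is the third (off-diagonal) equation, where one must verify that the two $\partial_{x^2}f_1$ contributions cancel so that no additional hypothesis on $f_1$ or $f_2$ is generated. Once this cancellation is observed, the equivalence is immediate.
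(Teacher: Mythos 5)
Your proposal is correct and follows exactly the paper's route: substitute $V=\frac{1}{f_i}E_i$ into the system of Lemma~\ref{lp}, observe that the off-diagonal equation vanishes identically and the remaining two reduce to $\partial_{x^i}f_1=\partial_{x^i}f_2=0$. You actually carry out the cancellation in the third equation explicitly, which the paper leaves to the reader.
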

\begin{proof}
In this case, the condition of being a Killing vector field from Lemma \ref{lp} becomes
\begin{equation*}
\left\{
    \begin{aligned}
&\frac{\displaystyle \partial f_1}{\displaystyle \partial x^i}=0\\
&\frac{\displaystyle \partial f_2}{\displaystyle \partial x^i}=0
    \end{aligned}
  \right. \ ,
\end{equation*}
hence, $f_1=f_1(x^j)$ and $f_2=f_2(x^j)$, where $j\in \{1,2\}$, $j\neq i$.
\end{proof}

Now, if we assume that both of the Lam\'{e} coefficients depend on a single variable, we get the following characterizations of a Killing vector field.

\begin{proposition}
Let $V=V^1E_1+V^2E_2$, with $V^1$ and $V^2$ smooth functions on $\mathbb R^2$.

(i) If $f_1=f_1(x^1)$ and $f_2=f_2(x^2)$, then $V$ is a Killing vector field on $(\mathbb R^2,g)$ if and only if
\begin{equation*}
\left\{
    \begin{aligned}
&\frac{\displaystyle \partial V^1}{\displaystyle \partial x^1}=0\\
&\frac{\displaystyle \partial V^2}{\displaystyle \partial x^2}=0\\
&f_1\frac{\displaystyle \partial V^2}{\displaystyle \partial x^1}+f_2\frac{\displaystyle \partial V^1}{\displaystyle \partial x^2}=0
\end{aligned}
  \right. \ .
\end{equation*}

(ii) If $f_1=f_1(x^1)$ and $f_2=f_2(x^1)$, then $V$ is a Killing vector field on $(\mathbb R^2,g)$ if and only if
\begin{equation*}
\left\{
    \begin{aligned}
&\frac{\displaystyle \partial V^1}{\displaystyle \partial x^1}=0\\
&f_2\frac{\displaystyle \partial V^2}{\displaystyle \partial x^2}-f_1\frac{\displaystyle f_2'}{\displaystyle f_2}V^1=0\\
&f_1\frac{\displaystyle \partial V^2}{\displaystyle \partial x^1}+f_2\frac{\displaystyle \partial V^1}{\displaystyle \partial x^2}+f_1\frac{\displaystyle f_2'}{\displaystyle f_2}V^2=0
\end{aligned}
  \right. \ .
\end{equation*}

(iii) If $f_1=f_1(x^2)$ and $f_2=f_2(x^1)$, then $V$ is a Killing vector field on $(\mathbb R^2,g)$ if and only if
\begin{equation*}
\left\{
    \begin{aligned}
&f_1\frac{\displaystyle \partial V^1}{\displaystyle \partial x^1}-f_2\frac{\displaystyle f_1'}{\displaystyle f_1}V^2=0\\
&f_2\frac{\displaystyle \partial V^2}{\displaystyle \partial x^2}-f_1\frac{\displaystyle f_2'}{\displaystyle f_2}V^1=0\\
&f_1\frac{\displaystyle \partial V^2}{\displaystyle \partial x^1}+f_2\frac{\displaystyle \partial V^1}{\displaystyle \partial x^2}+f_2\frac{\displaystyle f_1'}{\displaystyle f_1}V^1+f_1\frac{\displaystyle f_2'}{\displaystyle f_2}V^2=0
\end{aligned}
  \right. \ .
\end{equation*}
\end{proposition}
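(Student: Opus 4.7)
The plan is a direct case analysis: in each of (i)--(iii) I would substitute the assumed variable dependencies of $f_1$ and $f_2$ into the general Killing system of Lemma \ref{lp} and simplify, noting that by hypothesis $f_1$ and $f_2$ are nowhere zero, so any common prefactor may be divided out.

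In case (i), both $f_1=f_1(x^1)$ and $f_2=f_2(x^2)$ annihilate the cross-derivatives $\partial f_1/\partial x^2$ and $\partial f_2/\partial x^1$, so the two quantities $l_{12}$ and $l_{21}$ vanish identically. The first equation of Lemma \ref{lp} collapses to $f_1\,\partial V^1/\partial x^1=0$, and the nonvanishing of $f_1$ gives $\partial V^1/\partial x^1=0$; the second equation is handled symmetrically, and the third equation loses its two algebraic terms, leaving exactly the system claimed.

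In case (ii), the vanishing of $\partial f_1/\partial x^2$ still forces $l_{12}=0$, but now $\partial f_2/\partial x^1$ becomes the ordinary derivative $f_2'(x^1)$, hence $l_{21}=(f_1/f_2)f_2'$. Substitution into Lemma \ref{lp} and division of the first equation by the nonvanishing factor $f_1$ yields the stated system verbatim. Case (iii) is entirely analogous: neither cross-derivative vanishes, they reduce to the ordinary derivatives $f_1'(x^2)$ and $f_2'(x^1)$, so $l_{12}=(f_2/f_1)f_1'$ and $l_{21}=(f_1/f_2)f_2'$, and the three equations of Lemma \ref{lp} transcribe immediately to the displayed system.

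No genuine obstacle appears; the argument is pure bookkeeping on Lemma \ref{lp}. The one point worth flagging is the everywhere-nonvanishing hypothesis on $f_1$ and $f_2$, which is what legitimises passing from $f_1\,\partial V^1/\partial x^1=0$ to $\partial V^1/\partial x^1=0$ (and its analogues) in cases (i) and (ii); without this, the simplified systems would only be implications rather than equivalences.
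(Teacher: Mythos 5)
Your proof is correct and follows exactly the paper's approach: the paper likewise proves all three cases by direct substitution of the assumed variable dependencies into Lemma \ref{lp} (its proof reads simply that the conclusions follow immediately from that lemma). Your added remark about the nowhere-vanishing of $f_1$ and $f_2$ justifying the cancellation in cases (i) and (ii) is a sound and worthwhile clarification.
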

\begin{proof}
The conclusions follow immediately from Lemma \ref{lp}.
\end{proof}

We shall further consider the case when the component function $V^i$ of a vector field $V$ depends on the same variable as the function $f_i$ does, for $i\in \{1,2\}$, and using the above proposition, we prove the following results.

\begin{theorem}
If $f_i=f_i(x^i)$ and $V^i=V^i(x^i)$, for $i\in \{1,2\}$, then $V=V^1E_1+V^2E_2$ is a Killing vector field on $(\mathbb R^2,g)$ if and only if $V^i=c_i\in \mathbb R$, for $i\in \{1,2\}$.
\end{theorem}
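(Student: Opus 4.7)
The plan is to reduce the theorem immediately to case (i) of the preceding proposition. Since the hypothesis $f_i=f_i(x^i)$ for $i\in\{1,2\}$ is exactly the hypothesis of part (i), a vector field $V=V^1E_1+V^2E_2$ is Killing if and only if the three equations
\[
\frac{\partial V^1}{\partial x^1}=0, \qquad \frac{\partial V^2}{\partial x^2}=0, \qquad f_1\frac{\partial V^2}{\partial x^1}+f_2\frac{\partial V^1}{\partial x^2}=0
\]
are satisfied. So the task is just to see how these simplify under the additional assumption $V^i=V^i(x^i)$.

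For the forward direction, I would first observe that $V^1=V^1(x^1)$ combined with $\partial V^1/\partial x^1=0$ forces $V^1$ to be identically constant, say $V^1=c_1\in\mathbb R$; symmetrically, $V^2=V^2(x^2)$ and $\partial V^2/\partial x^2=0$ give $V^2=c_2\in\mathbb R$. With $V^1$ depending only on $x^1$ and $V^2$ depending only on $x^2$, the partial derivatives $\partial V^2/\partial x^1$ and $\partial V^1/\partial x^2$ both vanish, so the third equation is automatically fulfilled and imposes no further restriction. The converse direction is a direct check: if $V^1$ and $V^2$ are constants, all three equations in case (i) hold trivially, so $V=c_1E_1+c_2E_2$ is Killing.

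There is essentially no obstacle here; the statement is a clean corollary of case (i), and the whole argument comes down to recognizing that a function of $x^i$ alone whose $\partial/\partial x^i$-derivative vanishes must be constant.
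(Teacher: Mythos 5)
Your proposal is correct and follows the same route as the paper: both reduce to case (i) of the preceding proposition, note that $V^i=V^i(x^i)$ together with $\partial V^i/\partial x^i=0$ forces $(V^i)'=0$, hence $V^i$ constant, with the third equation holding automatically. You simply spell out the steps that the paper's one-line proof leaves implicit.
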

\begin{proof}
We have
$$(V^1)'=(V^2)'=0,$$
and we get the conclusion.
\end{proof}

\begin{example}
$V=e^{x^1}\frac{\displaystyle \partial}{\displaystyle \partial x^1}+e^{x^2}\frac{\displaystyle \partial}{\displaystyle \partial x^2}$ is a Killing vector field on
$$\left(\mathbb R^2, \ g=e^{-2x^1}dx^1\otimes dx^1+e^{-2x^2}dx^2\otimes dx^2\right).$$
\end{example}

\begin{theorem}
If $f_i=f_i(x^1)$ and $V^i=V^i(x^1)$, for $i\in \{1,2\}$, then $V=V^1E_1+V^2E_2$ is a Killing vector field on $(\mathbb R^2,g)$ if and only if one of the following assertions holds:

(i) $V^1=c_1\in \mathbb R\setminus \{0\}$, $V^2=c_2\in \mathbb R$, and $f_2$ is constant;

(ii) $V^1=0$ and $V^2=\frac{\displaystyle c_2}{\displaystyle f_2}$, where $c_2\in \mathbb R$.
\end{theorem}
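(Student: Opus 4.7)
The plan is to specialize case (ii) of the preceding Proposition (which treats $f_1=f_1(x^1)$ and $f_2=f_2(x^1)$) to the situation where additionally $V^1=V^1(x^1)$ and $V^2=V^2(x^1)$, and then read off the possibilities by a short case split on whether $V^1$ vanishes.

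First I would substitute $V^i=V^i(x^1)$ into the three Killing equations. The first equation $\partial V^1/\partial x^1=0$ forces $V^1=c_1\in\mathbb R$. The second equation collapses, since $\partial V^2/\partial x^2=0$, to $f_1(f_2'/f_2)V^1=0$; because $f_1$ is nowhere zero, this reduces to $f_2'\,V^1=0$. The third equation collapses, since $\partial V^1/\partial x^2=0$, to $f_1(V^2)'+f_1(f_2'/f_2)V^2=0$, i.e.\ after dividing by $f_1$,
\[
(V^2)'+\frac{f_2'}{f_2}V^2=0.
\]

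Next I would split on $V^1$. If $c_1\neq 0$, the simplified second equation forces $f_2'=0$, so $f_2$ is constant; then $f_2'/f_2=0$ and the third equation reduces to $(V^2)'=0$, giving $V^2=c_2\in\mathbb R$ — this is (i). If $c_1=0$, the second equation is automatic and the third equation integrates directly: $(\ln|V^2|)'=-(\ln|f_2|)'$, so $V^2=c_2/f_2$ for some $c_2\in\mathbb R$ — this is (ii).

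The converse is immediate: each of the two families plainly satisfies the three specialized equations, hence is Killing by the previous Proposition. There is no real obstacle here; the only thing to be a bit careful about is remembering that $f_1,f_2$ are nowhere zero, so that dividing by $f_1$ and $f_2$ is legitimate, and noting that case (ii) allows $c_2=0$ (the zero vector field), which is also Killing.
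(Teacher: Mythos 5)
Your argument is correct and takes essentially the same route as the paper: both reduce the Killing equations to the system $(V^1)'=0$, $f_2'V^1=0$, $f_2(V^2)'+f_2'V^2=0$ and then split on whether the constant $V^1=c_1$ vanishes. The only cosmetic point is that integrating the third equation via $(\ln|V^2|)'=-(\ln|f_2|)'$ tacitly assumes $V^2$ is nowhere zero; rewriting it as $(f_2V^2)'=0$ yields $V^2=c_2/f_2$ directly, including the case $c_2=0$.
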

\begin{proof}
We have
\begin{equation*}
\left\{
    \begin{aligned}
&(V^1)'=0\\
&f_2'V^1=0\\
&f_2(V^2)'+f_2'V^2=0
\end{aligned}
  \right. \ ,
\end{equation*}
and we get the conclusion. 

The converse implication also holds true.
\end{proof}

\begin{example}
$V=e^{x^1}\frac{\displaystyle \partial}{\displaystyle \partial x^1}+\frac{\displaystyle \partial}{\displaystyle \partial x^2}$ is a Killing vector field on
$$\left(\mathbb R^2, \ g=e^{-2x^1}dx^1\otimes dx^1+dx^2\otimes dx^2\right).$$
\end{example}

\begin{example}
$V=\frac{\displaystyle \partial}{\displaystyle \partial x^2}$ is a Killing vector field on
$$\left(\mathbb R^2, \ g=e^{2x^1}dx^1\otimes dx^1+e^{-2x^1}dx^2\otimes dx^2\right).$$
\end{example}

\begin{theorem}\label{pd}
If $f_1=f_1(x^2)$, $f_2=f_2(x^1)$, $V^1=V^1(x^2)$, $V^2=V^2(x^1)$, then $V$ is a Killing vector field on $(\mathbb R^2,g)$ if and only if
\begin{equation*}
\left\{
    \begin{aligned}
&V^1(x^2)=\frac{kF_1(x^2)+c_1}{f_1(x^2)}\\
&V^2(x^1)=-\frac{kF_2(x^1)+c_2}{f_2(x^1)}\\
&f_1'V^2=f_2'V^1=0
\end{aligned}
  \right. \ ,
\end{equation*}
where $k,c_1,c_2\in \mathbb R$ and $F_i'=f_i^2$, for $i\in \{1,2\}$.
\end{theorem}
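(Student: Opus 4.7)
The plan is to specialize part (iii) of the preceding proposition to the hypothesis $V^1=V^1(x^2)$ and $V^2=V^2(x^1)$. Since $\partial V^1/\partial x^1=0$ and $\partial V^2/\partial x^2=0$, the first two Killing equations reduce (using that $f_1$ and $f_2$ are nowhere zero) to the purely algebraic constraints $f_1'V^2=0$ and $f_2'V^1=0$, which is exactly the last line of the conclusion.

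The third equation then reads $f_1(V^2)'+f_2(V^1)'+\frac{f_2 f_1'}{f_1}V^1+\frac{f_1 f_2'}{f_2}V^2=0$. The decisive step is to notice the product-rule regrouping $f_2(V^1)'+\frac{f_2f_1'}{f_1}V^1=\frac{f_2}{f_1}(f_1V^1)'$ and $f_1(V^2)'+\frac{f_1f_2'}{f_2}V^2=\frac{f_1}{f_2}(f_2V^2)'$, where the derivatives are with respect to $x^2$ and $x^1$ respectively. Dividing by $f_1f_2$ then separates the variables:
$$\frac{(f_1V^1)'(x^2)}{f_1(x^2)^2}+\frac{(f_2V^2)'(x^1)}{f_2(x^1)^2}=0.$$
Each summand depends on an independent variable, so both must be constants of opposite sign, say $k$ and $-k$. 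Using $F_i'=f_i^2$ and integrating once, I obtain $f_1V^1=kF_1+c_1$ and $f_2V^2=-(kF_2+c_2)$, which yields the stated formulas for $V^1$ and $V^2$.

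For the converse, I assume the three conditions and verify the system in part (iii). Equations (i) and (ii) are immediate consequences of $f_1'V^2=f_2'V^1=0$. For the third, differentiating the explicit formulas gives $(V^1)'=kf_1-(f_1'/f_1)V^1$ and $(V^2)'=-kf_2-(f_2'/f_2)V^2$, and a direct substitution produces exact cancellation of the $kf_1f_2$ and $-kf_1f_2$ terms together with the $(f_i'/f_i)V^i$ terms.

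The main obstacle is spotting the product-rule regrouping that converts the third Killing equation into a pure separation-of-variables identity; once this is in place, the rest is one-variable integration and routine bookkeeping. A minor subtlety worth keeping in mind is that $f_1'V^2=0$ forces either $f_1'\equiv 0$ or $V^2\equiv 0$ (and analogously for the other constraint), but the explicit formulas for $V^1$ and $V^2$ encode both alternatives simultaneously, so no separate case analysis is needed in the write-up.
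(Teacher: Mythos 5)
Your proposal is correct and follows essentially the same route as the paper: both reduce to the system of part (iii), derive the algebraic constraints $f_1'V^2=f_2'V^1=0$, and rewrite the third Killing equation via the product rule as the separated identity $\frac{(f_1V^1)'}{f_1^2}=-\frac{(f_2V^2)'}{f_2^2}$, from which the constant $k$ and the integrated formulas follow. Your explicit verification of the converse is slightly more detailed than the paper's one-line assertion, but the argument is the same.
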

\begin{proof}
We have
\begin{equation*}
\left\{
    \begin{aligned}
&f_1'V^2=0\\
&f_2'V^1=0\\
&f_1(V^2)'+f_2(V^1)'+f_2\frac{\displaystyle f_1'}{\displaystyle f_1}V^1+f_1\frac{\displaystyle f_2'}{\displaystyle f_2}V^2=0
\end{aligned}
  \right. \ .
\end{equation*}
The last equation of the system is equivalent to
$$f_1^2(f_2V^2)'+f_2^2(f_1V^1)'=0$$
and further to
$$\frac{\displaystyle (f_1V^1)'}{\displaystyle f_1^2}=-\frac{\displaystyle (f_2V^2)'}{\displaystyle f_2^2}.$$
Since the left term from the previous equality depends only on $x^2$ and the right term depends only on $x^1$, we deduce that the ratio must be a constant, let's say $k$, and we get, by integration,
$$V^1(x^2)=\frac{kF_1(x^2)+c_1}{f_1(x^2)} \ \ \textrm{and} \ \ V^2(x^1)=-\frac{kF_2(x^1)+c_2}{f_2(x^1)},$$
where $c_1,c_2\in \mathbb R$ and $F_i'=f_i^2$, for $i\in \{1,2\}$. 

The converse implication also holds true.
\end{proof}

\begin{remark}
Under the hypotheses of Theorem \ref{pd}, the condition $f_1'V^2=f_2'V^1=0$ is equivalent to the fact that one of the following cases holds true:

(1) $f_1=k_1\in \mathbb R\setminus \{0\}$ and $f_2=k_2\in \mathbb R\setminus \{0\}$, or

(2) $f_1=k_1\in \mathbb R\setminus \{0\}$ and $V^1=0$, or

(3) $f_2=k_2\in \mathbb R\setminus \{0\}$ and $V^2=0$, or

(4) $V^1=V^2=0$. 

In the first case, we get:

\begin{equation*}
\left\{
    \begin{aligned}
&V^1(x^2)=kk_1x^2+c_1\\
&V^2(x^1)=-kk_2x^1+c_2
\end{aligned}
  \right. \ , \ \ k, c_1,c_2\in \mathbb R.
\end{equation*}
\end{remark}

If the vector field has a zero component function or if one of the Lam\'{e} coefficients of the metric is constant, we obtain the following consequences.
\begin{corollary}
Under the hypotheses of Theorem \ref{pd}, if $V$ is a Killing vector field and $V^i=0$, then, either $V$ is the zero vector field or $f_i$ is constant.
\end{corollary}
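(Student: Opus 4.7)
The strategy is to plug $V^i=0$ directly into the characterization from Theorem \ref{pd} and trace what the remaining constraints force. By symmetry it suffices to treat $i=1$, so assume $V^1\equiv 0$.

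From Theorem \ref{pd}, $V^1(x^2)=\big(kF_1(x^2)+c_1\big)/f_1(x^2)$ with $F_1'=f_1^2$. The key observation is that since $f_1$ is nowhere zero, $F_1'=f_1^2>0$ everywhere, so $F_1$ is strictly monotone and, in particular, non-constant on $\mathbb R$. Hence $kF_1+c_1\equiv 0$ forces first $k=0$ (looking at the slope) and then $c_1=0$. Substituting $k=0$ in the formula for $V^2$ yields $V^2(x^1)=-c_2/f_2(x^1)$.

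Now invoke the remaining constraint $f_1'V^2=f_2'V^1=0$. The equation $f_2'V^1=0$ is automatically satisfied. The equation $f_1'V^2=0$ becomes $-c_2 f_1'/f_2=0$, which, since $f_2$ is nowhere zero, reduces to $c_2 f_1'\equiv 0$. This leaves exactly two alternatives: either $c_2=0$, in which case $V^2\equiv 0$ and so $V$ is the zero vector field; or $f_1'\equiv 0$, in which case $f_1$ is a (nonzero) constant. The case $V^2\equiv 0$ is entirely analogous, interchanging the roles of the indices $1$ and $2$.

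There is no real obstacle here; the proof is essentially bookkeeping with the formulas of Theorem \ref{pd}, the only slightly nontrivial point being the remark that $F_i$ is non-constant because $F_i'=f_i^2$ is strictly positive, which is what lets us conclude $k=0$ and $c_i=0$ simultaneously from $V^i\equiv 0$.
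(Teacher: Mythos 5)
Your proof is correct and follows essentially the same route as the paper: substitute $V^i=0$ into the characterization of Theorem \ref{pd}, deduce $V^j=c_j/f_j$, and read off the dichotomy from $f_i'V^j=0$. The only cosmetic difference is that you start from the solved formulas and must first argue $k=c_1=0$ via the strict monotonicity of $F_i$ (a valid observation, since $F_i'=f_i^2>0$), whereas the paper works directly from the intermediate system $(f_jV^j)'=0$, $f_i'V^j=0$.
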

\begin{proof}
In this case,
\begin{equation*}
\left\{
    \begin{aligned}
&f_i'V^j=0\\
&(f_jV^j)'=0
\end{aligned}
  \right. \ ,
\end{equation*}
for $j\neq i$. From the second equation of this system we find
$V^j=\frac{\displaystyle c_j}{\displaystyle f_j}$, where $c_j\in \mathbb R$, which, combined with the first one, gives $f_i'c_j=0$. Therefore, $c_j=0$ (hence, $V^j=0$) or $f_i$ is constant.
\end{proof}

\begin{example}
$V=\frac{\displaystyle \partial}{\displaystyle \partial x^1}$ is a Killing vector field on
$$\left(\mathbb R^2, \ g=e^{-2x^2}dx^1\otimes dx^1+dx^2\otimes dx^2\right).$$
\end{example}

\begin{corollary}
Under the hypotheses of Theorem \ref{pd}, if $V$ is a Killing vector field and $f_i=k_i\in \mathbb R\setminus \{0\}$, we get
$$V^i(x^j)=c_1x^j+c_2 , \ \ V^j(x^i)=-\frac{\frac{\displaystyle c_1}{\displaystyle k_i}F_j(x^i)+c_3}{f_j(x^i)},\ \ \textrm{and} \ \ f_j'V^i=0,$$
for $j\neq i$, where $c_1,c_2,c_3\in \mathbb R$ and $F_j'=f_j^2$, which is equivalent to
\begin{equation*}
\left\{
    \begin{aligned}
&V^i=0\\
&V^j(x^i)=\frac{\displaystyle c_j}{\displaystyle f_j(x^i)}
\end{aligned}
  \right. \ , \ \  c_j\in \mathbb R, \ \ or
\end{equation*}
\begin{equation*}
\left\{
    \begin{aligned}
&f_j=k_j\\
&V^i(x^j)=c_1x^j+c_2\\
&V^j(x^i)=-\frac{\displaystyle c_1k_j}{\displaystyle k_i}x^i+c_3
\end{aligned}
  \right. \ , \ \  k_j\in \mathbb R\setminus\{0\}, c_1,c_2,c_3\in \mathbb R.
\end{equation*}
\end{corollary}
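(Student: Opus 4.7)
The plan is to specialize the conclusion of Theorem \ref{pd} to $f_i = k_i$ and then unravel the residual constraint $f_j' V^i = 0$ into the two stated alternatives.

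First I would substitute $f_i = k_i$ into the formula $V^i(x^j) = (kF_i(x^j) + c_i)/f_i(x^j)$ supplied by Theorem \ref{pd}. Since $F_i' = f_i^2 = k_i^2$ forces $F_i$ to be affine in $x^j$, and the denominator $k_i$ is a nonzero constant, the quotient simplifies to an affine function of $x^j$; after absorbing the free integration constant for $F_i$ together with $c_i/k_i$ into a single parameter, this takes exactly the form $V^i(x^j) = c_1 x^j + c_2$ with $c_1 = k k_i$. Rewriting $k = c_1/k_i$ in the formula $V^j(x^i) = -(kF_j(x^i) + c_j)/f_j(x^i)$ from Theorem \ref{pd} and relabeling its free constant as $c_3$ gives the claimed closed form for $V^j$. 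The auxiliary condition $f_1' V^2 = f_2' V^1 = 0$ from Theorem \ref{pd} reduces, since $f_i' \equiv 0$, to the single requirement $f_j' V^i = 0$, which is the third condition of the first displayed system.

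Next I would translate $f_j'(x^i)(c_1 x^j + c_2) = 0$ into the two alternative systems. Because the factor $c_1 x^j + c_2$ depends only on $x^j$ and $f_j'$ only on $x^i$, a product of two single-variable functions vanishing identically on $\mathbb R^2$ forces one of the two factors to vanish identically. Either $c_1 = c_2 = 0$, in which case $V^i \equiv 0$ and the formula for $V^j$ collapses to $V^j = -c_3/f_j$, which after renaming $c_j := -c_3$ matches the first alternative; or $f_j' \equiv 0$, i.e., $f_j = k_j \in \mathbb R \setminus \{0\}$, in which case $F_j$ is again affine in $x^i$ and the formula for $V^j(x^i)$ simplifies to $V^j(x^i) = -(c_1 k_j/k_i) x^i + c_3$ after absorbing the new integration constant, matching the second alternative. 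The converse direction is immediate by plugging each alternative back into the system of Theorem \ref{pd}.

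The only delicate point is the bookkeeping of the integration constants generated by $F_i$ and $F_j$: each explicit antiderivative produces an additive constant that must be consistently merged with a pre-existing free parameter, and I must verify that after these relabelings the resulting constants $c_1, c_2, c_3, c_j$ remain independent, so that no spurious relation is introduced. I do not anticipate a genuine obstacle beyond this linear-algebra book-keeping, since the remaining content is the elementary factorization argument in the paragraph above.
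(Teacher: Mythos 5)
Your proposal is correct and follows essentially the same route as the paper, whose proof is the one-line remark that everything ``follows immediately from the expression of $V^i$''; you simply carry out that specialization explicitly (affine $F_i$, relabeling $k=c_1/k_i$, and the factorization of $f_j'(x^i)V^i(x^j)=0$ into the two alternatives). The constant bookkeeping you flag works out as you expect, so no gap remains.
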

\begin{proof}
It follows immediately from the expression of $V^i$.
\end{proof}

\begin{example}
$V=k_1^2x^2\frac{\displaystyle \partial}{\displaystyle \partial x^1}-k_2^2x^1\frac{\displaystyle \partial}{\displaystyle \partial x^2}$ is a Killing vector field on
$$\left(\mathbb R^2, \ g=\frac{\displaystyle 1}{\displaystyle k_1^2} dx^1\otimes dx^1+\frac{\displaystyle 1}{\displaystyle k_2^2} dx^2\otimes dx^2\right).$$
\end{example}

If one of the functions, let's say $f_2$, is constant, and if we do not impose any restriction on $V^i$, $i\in \{1,2\}$, then we obtain the expressions of the Killing vector fields as follows.

\begin{theorem}
If $f_1=f_1(x^1)$ and $f_2=k_2\in \mathbb R\setminus \{0\}$, then $V=V^1E_1+V^2E_2$ is a Killing vector field on $(\mathbb R^2,g)$ if and only if
$$V^1(x^2)=-\frac{k}{k_2}x^2+c_1 \ \ \textrm{and} \ \ V^2(x^1)=kF_2(x^1)+c_2,$$
where $k\in \mathbb R$, $c_1,c_2\in \mathbb R$, and $F_2'=\frac{\displaystyle 1}{\displaystyle f_1}$.
\end{theorem}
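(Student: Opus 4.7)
The plan is to specialize part (ii) of the previous proposition to the case $f_2 = k_2$ constant, so that $f_2' = 0$, and then use a standard separation-of-variables argument to obtain the stated formulas.

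First I would write out the three Killing equations from part (ii) of the proposition with $f_2$ replaced by the constant $k_2$. Two of the three terms involving $f_2'/f_2$ vanish, leaving the simplified system
\begin{equation*}
\left\{
\begin{aligned}
&\frac{\partial V^1}{\partial x^1}=0\\
&k_2\frac{\partial V^2}{\partial x^2}=0\\
&f_1\frac{\partial V^2}{\partial x^1}+k_2\frac{\partial V^1}{\partial x^2}=0
\end{aligned}
\right. \ .
\end{equation*}
From the first two equations (and $k_2 \neq 0$) I conclude immediately that $V^1$ depends only on $x^2$ and $V^2$ depends only on $x^1$, so I may write $V^1=V^1(x^2)$ and $V^2=V^2(x^1)$.

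Next I would rewrite the third equation as
$$f_1(x^1)(V^2)'(x^1) = -k_2 (V^1)'(x^2).$$
Since the left-hand side is a function of $x^1$ only and the right-hand side is a function of $x^2$ only, both must equal a common constant $k \in \mathbb R$. Solving each ODE separately by integration yields $(V^1)'(x^2) = -k/k_2$, hence $V^1(x^2) = -\frac{k}{k_2}x^2 + c_1$, and $(V^2)'(x^1) = k/f_1(x^1)$, hence $V^2(x^1) = kF_2(x^1) + c_2$ where $F_2'=1/f_1$, giving the claimed expressions.

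Finally, for the converse I would simply substitute the two candidate functions back into the simplified system and verify that each equation is satisfied. The argument is essentially routine: the main (and only) conceptual point is the separation-of-variables step, since everything else reduces to reading off the consequences of the vanishing of $f_2'$ in the system provided by the previous proposition. I do not anticipate any real obstacle.
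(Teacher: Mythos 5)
Your proposal is correct and follows essentially the same route as the paper: specializing the system from part (ii) of the proposition with $f_2'=0$, concluding $V^1=V^1(x^2)$ and $V^2=V^2(x^1)$, and then separating variables in $f_1(V^2)'=-k_2(V^1)'$ to get a common constant $k$ and integrate. No gaps.
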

\begin{proof}
We have
\begin{equation*}
\left\{
    \begin{aligned}
&\frac{\displaystyle \partial V^1}{\displaystyle \partial x^1}=0\\
&\frac{\displaystyle \partial V^2}{\displaystyle \partial x^2}=0\\
&f_1\frac{\displaystyle \partial V^2}{\displaystyle \partial x^1}=-k_2\frac{\displaystyle \partial V^1}{\displaystyle \partial x^2}
\end{aligned}
  \right. \ .
\end{equation*}
It follows that
$$V^1=V^1(x^2), \ \ V^2=V^2(x^1), \ \ \textrm{and} \ \ f_1(V^2)'=-k_2(V^1)'= \textrm{constant},$$
and we get the conclusion. 

The converse implication also holds true.
\end{proof}

\begin{example}
$V=x^2e^{x^1}\frac{\displaystyle \partial}{\displaystyle \partial x^1}+e^{-x^1}\frac{\displaystyle \partial}{\displaystyle \partial x^2}$ is a Killing vector field on
$$\left(\mathbb R^2, \ g=e^{-2x^1}dx^1\otimes dx^1+dx^2\otimes dx^2\right).$$
\end{example}

\begin{theorem}\label{g6}
If $f_1=f_1(x^2)$ and $f_2=k_2\in \mathbb R\setminus \{0\}$, then $V=V^1E_1+V^2E_2$ is a Killing vector field on $(\mathbb R^2,g)$ if and only if
$$V^1(x^1,x^2)=-\frac{\displaystyle F_1(x^2)}{\displaystyle k_2f_1(x^2)}(V^2)'(x^1)+\frac{\displaystyle F(x^1)}{\displaystyle f_1(x^2)},$$
where $F_1'=f_1^2$, $F'(x^1)=k_2\displaystyle \frac{f_1'(x^2)}{f_1(x^2)}V^2(x^1)+\displaystyle \frac{F_1(x^2)}{k_2}(V^2)''(x^1)$, and $V^2$ has one of the following expressions:

(i) $V^2(x^1)=c_1x^1+c_2$ and $f_1(x^2)=c_3e^{c_4x^2}$, where $c_1,c_2,c_4\in \mathbb R$ and $c_3\in \nolinebreak \mathbb R\setminus \nolinebreak \{0\}$;

(ii) $V^2(x^1)=c_1e^{\sqrt{k}x^1}+c_2e^{-\sqrt{k}x^1}$, where $c_1,c_2\in \mathbb R$ and $k:=-k_2^2\frac{\displaystyle1}{\displaystyle f_1^2}\left(\frac{\displaystyle f_1'}{\displaystyle f_1}\right)'>0$ is a constant;

(iii) $V^2(x^1)=c_1\cos({\sqrt{-k}x^1})+c_2\sin({\sqrt{-k}x^1})$, where $c_1,c_2\in \mathbb R$ and $k:=-k_2^2\frac{\displaystyle1}{\displaystyle f_1^2}\left(\frac{\displaystyle f_1'}{\displaystyle f_1}\right)'< \nolinebreak 0$ is a constant.
\end{theorem}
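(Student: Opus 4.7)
The plan is to specialize the Killing system from Lemma~\ref{lp} to $f_1 = f_1(x^2)$ and $f_2 = k_2 \in \mathbb{R}\setminus\{0\}$. Under these hypotheses $\partial f_1/\partial x^1 = \partial f_2/\partial x^1 = \partial f_2/\partial x^2 = 0$ and $\partial f_1/\partial x^2 = f_1'(x^2)$, so the three equations collapse to
\[
f_1 \frac{\partial V^1}{\partial x^1} = k_2 \frac{f_1'}{f_1} V^2, \qquad \frac{\partial V^2}{\partial x^2} = 0, \qquad f_1 \frac{\partial V^2}{\partial x^1} + k_2 \frac{\partial V^1}{\partial x^2} + k_2 \frac{f_1'}{f_1} V^1 = 0.
\]
The middle equation immediately gives $V^2 = V^2(x^1)$, so $\partial_{x^1} V^2 = (V^2)'(x^1)$.

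Next I would rewrite the third equation, via the identity $\partial_{x^2}(f_1 V^1) = f_1 \partial_{x^2} V^1 + f_1' V^1$, as
\[
\frac{\partial (f_1 V^1)}{\partial x^2} = -\frac{f_1^2}{k_2}(V^2)'(x^1).
\]
Integrating in $x^2$ and introducing an antiderivative $F_1$ of $f_1^2$ yields
\[
V^1(x^1,x^2) = -\frac{F_1(x^2)}{k_2 f_1(x^2)}(V^2)'(x^1) + \frac{F(x^1)}{f_1(x^2)},
\]
where $F$ is an unknown function of $x^1$ only. This is exactly the formula stated in the theorem.

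Substituting this expression for $V^1$ into the first equation and simplifying gives the compatibility condition
\[
F'(x^1) = k_2 \frac{f_1'(x^2)}{f_1(x^2)} V^2(x^1) + \frac{F_1(x^2)}{k_2}(V^2)''(x^1).
\]
The main obstacle, and the crux of the argument, is what I extract from this: since $F'(x^1)$ depends only on $x^1$, differentiating both sides in $x^2$ forces $k_2 (f_1'/f_1)' V^2(x^1) + (f_1^2/k_2)(V^2)''(x^1) = 0$, i.e.\ $(V^2)''(x^1) = k\, V^2(x^1)$ with $k = -k_2^2 f_1^{-2}(f_1'/f_1)'$. Because $(V^2)''/V^2$ depends only on $x^1$ while $-k_2^2 f_1^{-2}(f_1'/f_1)'$ depends only on $x^2$, a separation-of-variables argument forces $k$ to be a real constant (the trivial subcase $V^2 \equiv 0$ being absorbed into the general solution with $c_1=c_2=0$).

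Finally I would integrate the linear ODE $(V^2)'' = kV^2$ in three cases according to the sign of $k$. When $k=0$, one obtains $V^2(x^1) = c_1 x^1 + c_2$, and integrating $(f_1'/f_1)' = 0$ twice gives $f_1(x^2) = c_3 e^{c_4 x^2}$, which is case (i). The cases $k > 0$ and $k < 0$ yield the exponential and trigonometric profiles of (ii) and (iii) respectively. The converse direction---that any such triple $(f_1, V^1, V^2)$ satisfies the Killing system---reduces to a routine substitution into the three equations above.
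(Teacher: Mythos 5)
Your proposal is correct and follows essentially the same route as the paper: specialize Lemma~\ref{lp} to $f_1=f_1(x^2)$, $f_2=k_2$, integrate to obtain the stated formula for $V^1$, extract the compatibility condition on $F'$, and separate variables to reduce to $(V^2)''=kV^2$ with the same three-case analysis. The only organizational difference is that you integrate the third equation in $x^2$ directly by recognizing the exact derivative $\partial_{x^2}(f_1V^1)=-\frac{f_1^2}{k_2}(V^2)'$, whereas the paper first integrates the first equation in $x^1$ (introducing an antiderivative $v^2$ of $V^2$ and an auxiliary function $G(x^2)$) and then back-substitutes; your ordering is a little more direct but mathematically equivalent, and your treatment of the degenerate subcase $V^2\equiv 0$ matches the paper's level of detail.
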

\begin{proof}
We have
\begin{equation*}
\left\{
    \begin{aligned}
&\frac{\displaystyle \partial V^1}{\displaystyle \partial x^1}=k_2\frac{\displaystyle f_1'}{\displaystyle f_1^2}V^2\\
&\frac{\displaystyle \partial V^2}{\displaystyle \partial x^2}=0\\
&\frac{\displaystyle \partial V^2}{\displaystyle \partial x^1}=-k_2\frac{\displaystyle 1}{\displaystyle f_1}\frac{\displaystyle \partial V^1}{\displaystyle \partial x^2}-k_2\frac{\displaystyle f_1'}{\displaystyle f_1^2}V^1
\end{aligned}
  \right. \ .
\end{equation*}
From the second equation, it follows that $V^2=V^2(x^1)$. Integrating the first equation with respect to $x^1$, we get
$$V^1(x^1,x^2)=k_2\left(\frac{\displaystyle f_1'}{\displaystyle f_1^2}\right)(x^2) \ v^2(x^1)+G(x^2),$$
where $(v^2)'=V^2$ and $G$ is a smooth real function on $\mathbb R^2$ depending only on  $x^2$. Now, taking the derivative of $V^1$ with respect to $x^2$, we get
$$\frac{\displaystyle \partial V^1}{\displaystyle \partial x^2}(x^1,x^2)=k_2\left(\frac{\displaystyle f_1'}{\displaystyle f_1^2}\right)'(x^2) \ v^2(x^1)+G'(x^2),$$
which, replaced in the last equation of the system, gives
$$(V^2)'(x^1)=-k_2^2\frac{\displaystyle 1}{\displaystyle f_1^2(x^2)}\left(\frac{\displaystyle f_1'}{\displaystyle f_1}\right)'(x^2) \ v^2(x^1)-k_2
\left(\frac{\displaystyle (f_1G)'}{\displaystyle f_1^2}\right)(x^2).$$
Therefore,
$$(f_1G)'(x^2)=\frac{\displaystyle f_1^2(x^2)}{\displaystyle k_2}\left[-(V^2)'(x^1)-k_2^2\frac{\displaystyle 1}{\displaystyle f_1^2(x^2)}\left(\frac{\displaystyle f_1'}{\displaystyle f_1}\right)'(x^2) \ v^2(x^1)\right],$$
and we get, by integration with respect to $x^2$, that
$$G(x^2)=\frac{\displaystyle 1}{\displaystyle f_1(x^2)}\left[-\frac{\displaystyle F_1(x^2)(V^2)'(x^1)}{\displaystyle k_2}-k_2\left(\frac{\displaystyle f_1'}{\displaystyle f_1}\right)(x^2) \ v^2(x^1)+F(x^1)\right],$$
where $F_1'=f_1^2$ and $F$ is a smooth real function on $\mathbb R^2$ depending only on $x^1$. Then
$$V^1(x^1,x^2)=-\frac{\displaystyle F_1(x^2)}{\displaystyle k_2f_1(x^2)}(V^2)'(x^1)+\frac{\displaystyle F(x^1)}{\displaystyle f_1(x^2)},$$
and, by derivating $V^1$ with respect to $x^1$ and replacing it in the first equation of the system, implies that
$$F'(x^1)=k_2\frac{f_1'(x^2)}{f_1(x^2)}V^2(x^1)+\frac{F_1(x^2)}{k_2}(V^2)''(x^1).$$
Now, derivating $(V^2)'$, we find
$$(V^2)''(x^1)=-k_2^2\frac{\displaystyle 1}{\displaystyle f_1^2(x^2)}\left(\frac{\displaystyle f_1'}{\displaystyle f_1}\right)'(x^2) \ V^2(x^1),$$
therefore, $-k_2^2\frac{\displaystyle 1}{\displaystyle f_1^2}\left(\frac{\displaystyle f_1'}{\displaystyle f_1}\right)'$ must be a constant (let's say $k$), or $V^2=0$. 

In the first situation, we have obtained:
\begin{equation*}
\left\{
    \begin{aligned}
&(V^2)''=kV^2\\
&k=-k_2^2\frac{\displaystyle 1}{\displaystyle f_1^2}\left(\frac{\displaystyle f_1'}{\displaystyle f_1}\right)'
\end{aligned}
  \right. \ .
\end{equation*}
For the first equation of this system, the associated characteristic equation is $y^2-k=0$, and we have the following cases.

\noindent (a) If $k=0$ (equivalent to $\frac{\displaystyle f_1'}{\displaystyle f_1}=c_1\in \mathbb R$, i.e., $f_1(x^2)=c_2e^{c_1x^2}$, where $c_2\in \mathbb R\setminus \{0\}$), then
$$V^2(x^1)=c_3x^1+c_4, \ \ c_3, c_4\in \mathbb R.$$

\noindent (b) If $k>0$, then
$$V^2(x^1)=c_1e^{\sqrt{k}x^1}+c_2e^{-\sqrt{k}x^1}, \ \ c_1,c_2\in \mathbb R.$$

\noindent (c) If $k<0$, then
$$V^2(x^1)=c_1\cos({\sqrt{-k}x^1})+c_2\sin({\sqrt{-k}x^1}), \ \ c_1, c_2\in \mathbb R.$$

If $V^2=0$, then $F=c_0$, $c_0\in \mathbb R$, $G=\frac{\displaystyle c_0}{\displaystyle f_1}$, and $V^1(x^2)=\frac{\displaystyle c_0}{\displaystyle f_1(x^2)}$. 

The converse implication also holds true.
\end{proof}

\begin{example}
$V=-\frac{\displaystyle e^{x^2}}{\displaystyle 2}\frac{\displaystyle \partial}{\displaystyle \partial x^1}+x^1\frac{\displaystyle \partial}{\displaystyle \partial x^2}$ is a Killing vector field on
$$\left(\mathbb R^2, \ g=e^{-2x^2}dx^1\otimes dx^1+dx^2\otimes dx^2\right).$$
\end{example}

\begin{remark}
Let us notice from Theorem \ref{g6} that $\sign(k)=-\sign \left(\frac{\displaystyle f_1'}{\displaystyle f_1}\right)'$, and
$$k=-k_2^2\frac{\displaystyle 1}{\displaystyle f_1^2}\left(\frac{\displaystyle f_1'}{\displaystyle f_1}\right)'=-k_2^2\frac{\displaystyle f_1''f_1-(f_1')^2}{\displaystyle f_1^4};$$
therefore, $\frac{\displaystyle f_1''f_1-(f_1')^2}{\displaystyle f_1^4}$ must be constant. In particular, this condition is satisfied if $\left(\frac{\displaystyle f_1'}{\displaystyle f_1}\right)'=0$; in this case, $f_1(x^2)=c_1e^{c_2x^2}$, where $c_1\in \mathbb R\setminus \{0\}$ and $c_2\in \mathbb R$.
\end{remark}

If both of the functions $f_1$ and $f_2$ are constant, from the above theorems, we get

\begin{corollary}
If $f_1=k_1\in \mathbb R\setminus \{0\}$ and $f_2=k_2\in \mathbb R\setminus \{0\}$, then $V=V^1E_1+V^2E_2$ is a Killing vector field on $(\mathbb R^2,g)$ if and only if
$$V^1(x^2)=-\frac{k}{k_2}x^2+c_1 \ \ \textrm{and} \ \ V^2(x^1)=\frac{k}{k_1}x^1+c_2,$$
where $k,c_1,c_2\in \mathbb R$.
\end{corollary}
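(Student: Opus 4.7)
The plan is to specialize the preceding theorem, which already treats the case $f_2=k_2$ constant with $f_1=f_1(x^1)$, by further taking $f_1=k_1$ constant; this is legitimate because a constant function trivially depends only on $x^1$, so every hypothesis of that theorem is satisfied. The conclusion therefore applies directly, and the only work left is to identify the explicit primitive $F_2$ under this further specialization.

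The next step is the short computation. Since $F_2'=1/f_1=1/k_1$, one has $F_2(x^1)=x^1/k_1$ up to an additive constant, which can be absorbed into the free parameter $c_2$. Substituting into $V^2(x^1)=kF_2(x^1)+c_2$ yields $V^2(x^1)=(k/k_1)x^1+c_2$, while $V^1(x^2)=-(k/k_2)x^2+c_1$ already has the claimed form. The converse is handled by noting that, under $f_1=k_1$ and $f_2=k_2$, the system of Lemma \ref{lp} reduces to $\partial V^1/\partial x^1=0$, $\partial V^2/\partial x^2=0$, and $k_1\,\partial V^2/\partial x^1+k_2\,\partial V^1/\partial x^2=0$, all of which the stated affine expressions satisfy identically.

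No serious obstacle is expected, because the corollary is a true specialization of an already proved result. The only minor subtlety is that $f_1=k_1$ also reads as $f_1=f_1(x^2)$, so the same statement could be derived instead from Theorem \ref{g6} (observing that $f_1'=0$ forces the constant $k$ there to vanish and places one in case (i), whose general solution is affine in $x^1$) or from Theorem \ref{pd} (where $F_1$ and $F_2$ become linear in their arguments); in either alternative route one recovers the displayed formulas after a harmless rescaling of the arbitrary constants.
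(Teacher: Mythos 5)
Your proposal is correct and follows essentially the same route as the paper: the paper introduces this corollary with ``from the above theorems, we get,'' and its written proof simply redoes the two-line computation (the system reduces to $\partial V^1/\partial x^1=\partial V^2/\partial x^2=0$ and $k_1(V^2)'=-k_2(V^1)'=\text{const}$), which is exactly the specialization of the preceding theorem that you invoke. Your identification $F_2(x^1)=x^1/k_1$ with the additive constant absorbed into $c_2$, and your direct check of the converse via Lemma \ref{lp}, match the paper's argument.
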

\begin{proof}
In this case, we have
\begin{equation*}
\left\{
    \begin{aligned}
&\frac{\displaystyle \partial V^1}{\displaystyle \partial x^1}=\frac{\displaystyle \partial V^2}{\displaystyle \partial x^2}=0\\
&k_1\frac{\displaystyle \partial V^2}{\displaystyle \partial x^1}=-k_2\frac{\displaystyle \partial V^1}{\displaystyle \partial x^2}
\end{aligned}
  \right. \ .
\end{equation*}
It follows that
$$V^1=V^1(x^2), \ \ V^2=V^2(x^1), \ \ \textrm{and} \ \ k_1(V^2)'=-k_2(V^1)' = \textrm{constant},$$
and we get the conclusion. 

The converse implication also holds true.
\end{proof}

\begin{remark}
If $f_1=f_2=1$, then it is well-known that the Killing vector fields $V$ on $\mathbb R^2$ with respect to the canonical metric $g_0=dx^1\otimes dx^1+dx^2\otimes dx^2$ are of the form
$$V_{(x^1,x^2)}=(c_0x^2+c_1)\left(\frac{\displaystyle \partial }{\displaystyle \partial x^1}\right)_{(x^1,x^2)}-(c_0x^1+c_2)\left(\frac{\displaystyle \partial }{\displaystyle \partial x^2}\right)_{(x^1,x^2)},$$
where $c_0,c_1,c_2\in \mathbb R$.
\end{remark}

\textit{Department of Mathematics}

\textit{Faculty of Mathematics and Computer Science}

\textit{West University of Timi\c{s}oara}

\textit{Bd. V. P\^{a}rvan 4, 300223, Timi\c{s}oara, Romania}

\textit{adarablaga@yahoo.com}


\begin{thebibliography}{1}
\bibitem{ber} Berestovskii, V.N., Nikonorov, Y.G. Killing vector fields of constant length on Riemannian manifolds. Sib. Math. J. 49 (2008), 395--407.
https://doi.org/10.1007/s11202-008-0039-3

\bibitem{be} Besse, A.L. Einstein Manifolds, Springer Berlin, Heidelberg Verlag, (1987).
https://doi.org/10.1007/978-3-540-74311-8

\bibitem{bl22} Blaga, A.M. On some $3$-dimensional almost $\eta$-Ricci solitons with diagonal metrics, arXiv:2406.12533v3 [math.DG]. 	
https://doi.org/10.48550/arXiv.2406.12533, Filomat 38(31) (2024).

\bibitem{balr} Blaga, A.M., La\c tcu, D.R. Flat $3$-manifolds with diagonal metrics
and applications to warped products, arXiv:2502.03064 [math.DG] (2025).
https://doi.org/10.48550/arXiv.2502.03064

\bibitem{vb} Katanaev, M.O. Killing vector fields and a homogeneous isotropic universe. Physics--Uspekhi 59(7) (2016), 689.
https://doi.org/10.3367/UFNe.2016.05.037808

\bibitem{killing} Killing, W. \" Uber die Grundlagen der Geometry, Crelle's Journal 109 (1892), 121--186.
https://doi.org/10.1515/crll.1892.109.121

\bibitem{g} Lynge, W.C. Sufficient conditions for periodicity of a Killing vector field, Proc. Amer. Math. Soc. 38 (1973), 614--616.
https://doi.org/10.2307/2038961

\bibitem{no} Nomizu, K. On local and global existence of Killing vector fields, Ann. Math. 72(1) (1960), 105--120.
https://doi.org/10.2307/1970148

\bibitem{ro} Rong, X. Positive curvature, local and global symmetry, and fundamental groups, Amer. J. Math. 121(5) (1999), 931--943.
http://www.jstor.org/stable/25098954

\bibitem{yo} Yorozu, S. Killing vector fields on complete Riemannian manifolds, Proc. Am. Math. Soc. 84 (1982), 115--120.
https://doi.org/10.2307/2043822
\end{thebibliography}
\end{document}